\documentclass[preprint,12pt]{elsarticle}

\usepackage{amsmath,amssymb,amsthm,mathtools}
\usepackage{enumitem}
\usepackage{hyperref}
\hypersetup{hidelinks}
\usepackage[nameinlink,noabbrev]{cleveref}
\usepackage{microtype}

\journal{Finite Fields and Their Applications}

\newtheorem{theorem}{Theorem}[section]
\newtheorem{lemma}[theorem]{Lemma}
\newtheorem{corollary}[theorem]{Corollary}

\newtheorem{remark}[theorem]{Remark}
\newtheorem{example}[theorem]{Example}

\newcommand{\F}{\mathbb F}

\begin{document}

\begin{frontmatter}

\title{Sparse Lifts of $q$-Linearized Permutation Polynomials with Explicit Inverses}

\author[addr1]{Yi Li}
\address[addr1]{School of Computer Science, Shanghai Jiao Tong University, Shanghai 200240, China}

\begin{abstract}
We give a higher unit-torus lift from $q$-linearized permutation polynomials over $\F_{q^n}$ to generally nonlinear sparse permutation polynomials over $\F_{q^{ns}}$.  The construction preserves compositional inverses: if a linearized polynomials and its inverse have coefficients $(a_i)$ and $(A_i)$, then the lifted polynomial and its inverse have the same coefficients. As a consequence, we obtain nonlinear binomial lifts whose inverses are again binomials in the two-dimensional case, including an explicit inverse for a binomial family over $\F_{q^4}$. The three-dimensional specialization yields permutation trinomials whose compositional inverses are again trinomials, together with explicit involution criteria and examples.
\end{abstract}

\begin{keyword}
finite fields \sep permutation polynomials \sep compositional inverses \sep sparse polynomials \sep unit circle \sep norm-one torus \sep linearized polynomials
\MSC[2020] 11T06 \sep 11T55 \sep 12E20
\end{keyword}

\end{frontmatter}

\section{Introduction}

Let $\F_q$ be a finite field.  A polynomial $f\in\F_q[x]$ is a permutation polynomial if it induces a bijection of $\F_q$.  Once $f$ is a permutation polynomial, its compositional inverse is the unique polynomial of degree at most $q-1$ representing the inverse map on $\F_q$.  The construction of permutation polynomials and the computation of their compositional inverses are closely related but technically different problems.  A proof of permutation behavior may only require a quotient map, a fiberwise bijection, or a character-sum criterion; an explicit inverse must keep track of the inverse map on all fibers.  This distinction becomes especially important for sparse polynomials, where one would like the inverse to remain sparse.

The inverse problem for permutation polynomials over finite fields has been studied through several complementary methods.  Wang obtained explicit inverse formulas for certain permutation polynomials and later for related cyclotomic mapping permutations~\cite{WangInversePP,WangCyclotomicNote}.  Piecewise constructions for inverses of cyclotomic mapping permutation polynomials were studied by Zheng, Yu, Zhang and Pei~\cite{ZhengYuZhangPeiCyclotomic}.  Li, Qu and Wang treated permutation polynomials of the form $x^r h(x^s)$ and described their compositional inverses through the induced maps on a subgroup and its fibers~\cite{LiQuWang2019}.  The AGW criterion has also become an important tool for both constructing permutations and computing inverses; see Akbary, Ghioca and Wang~\cite{AGW}, Niu, Li, Qu and Wang~\cite{NiuLiQuWang2021}, and Yuan~\cite{YuanAGW}.  Yuan's local method gives a systematic way to recover inverses from local inverse data~\cite{YuanLocal}, and Wu and Yuan applied this viewpoint to new classes of permutation polynomials and their inverses~\cite{WuYuanLocal}.  There are also important inverse formulas for linearized and related semilinear permutation polynomials, including work of Tuxanidy and Wang~\cite{TuxanidyWang2014}, Wu and Liu~\cite{WuLiuRevisited}, and Wu~\cite{WuLinearizedOdd}.  Further developments and many references are summarized in Wang's recent survey on compositional inverses of permutation polynomials~\cite{WangSurvey2025}.  Inverses of small-degree permutation polynomials were investigated by Zheng, Wang and Wei~\cite{ZhengWangWeiSmallDegree}.

In this paper, we introduce a higher unit-torus lifting mechanism which starts from a $q$-linearized permutation polynomial over $\F_{q^n}$ and produces a generally nonlinear sparse permutation polynomial over a larger field $\F_{q^{ns}}$.  The main feature is that the compositional inverse is lifted at the same time.  Thus, if the $q$-linearized polynomial has a sparse inverse, then the lifted nonlinear permutation polynomial also has an explicitly sparse inverse.  This gives a direct way to construct nonlinear sparse permutation polynomials whose inverses can still be written with a controlled number of terms.

The main theorem gives a sparse lift from $q$-linearized permutation polynomials to nonlinear permutation polynomials over higher extension fields. The exponents in the lifted polynomial are organized by the norm-one torus of order $\frac{q^n-1}{q-1}$, while the algebraic input is the compositional inverse of a $q$-linearized polynomial over $\mathbb{F}_{q^n}$. The exact exponent construction and proof are given in \cref{sec:setup,sec:main}.  This should be distinguished from general inverse algorithms for $x^rh(x^s)$-type or cyclotomic mapping permutations. Those algorithms are designed for broad applicability, whereas our contribution is a concrete sparse construction: if the underlying $q$-linearized polynomial has a sparse inverse, then the lifted nonlinear permutation polynomial has an explicit inverse obtained by lifting the same sparse terms.

Several concrete consequences follow from the main theorem.  In dimension two, the construction gives nonlinear permutation binomials whose compositional inverses are again binomials.  In particular, it supplies an explicit binomial inverse for a family of permutation binomials appearing in the work of Li, Feng and Wang~\cite{LiFengWangBinomials}.  To the best of our knowledge, few explicit examples are known of permutation binomials whose compositional inverses are again binomials. By contrast, general inverse methods typically produce inverses that are piecewise-defined, cyclotomic, or multi-term. The same two-dimensional specialization also yields explicit criteria for the resulting permutation polynomials to be involutions, together with concrete examples.  In dimension three, the lifting construction involves up to three nonzero terms, with exponents determined by the norm-one torus.  This yields explicit permutation trinomials whose compositional inverses are again trinomials. We also give a criterion for trinomial involutions and explicit nonlinear examples over binary fields.

The paper is organized as follows. \Cref{sec:setup} introduces the norm-one torus and the exponent notation used throughout the paper.  \Cref{sec:main} proves the lifting theorem and a descent theorem.  \Cref{sec:binomial} specializes the construction to the two-dimensional case and gives explicit inverse formulas for permutation binomials whose compositional inverses are also binomials, as well as criteria for involutions. \Cref{sec:trinomial} treats the three-dimensional case via the Dickson matrix of a $q$-linearized trinomial, giving explicit permutation trinomials whose compositional inverses are again trinomials, together with criteria and examples for involutions. The final section summarizes the construction and its possible extensions.

\section{The higher unit-torus setup}\label{sec:setup}

Let $q$ be a prime power, and let $n,s$ be positive integers.  Put
\[
        K=\F_{q^{ns}},\qquad L=\F_{q^n},\qquad N=q^{ns}-1,
\]
and
\[
        M=1+q+\cdots+q^{n-1}=\frac{q^n-1}{q-1}.
\]
Throughout the main construction, we assume
\[
        \gcd(s,M)=1.
\]
Let
\[
        S=\frac{N}{M}.
\]
Choose an integer $h$ satisfying
\[
        hs\equiv 1\pmod M,
\]
and define $T$ modulo $N$ by
\[
        T\equiv hS\pmod N.
\]

\begin{lemma}\label{lem:Tproperties}
The exponent $T$ has the following properties.
\begin{enumerate}[label=\rm(\roman*)]
    \item For every $x\in K^*$, one has $x^T\in \mu_M$.
    \item One has
    \[
        T\equiv q-1\pmod {q^n-1}.
    \]
Consequently, for every $z\in L^*$,
\[
        z^T=z^{q-1}.
    \]
\end{enumerate}
\end{lemma}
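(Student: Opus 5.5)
Both parts follow from elementary exponent arithmetic in the cyclic group $K^*$ of order $N$, with $\mu_M$ its unique subgroup of order $M$. Since $T$ is only defined modulo $N$, we first note that $x^T$ is well defined for $x\in K^*$.

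For (i), write $x^T=(x^S)^h$. We have $(x^S)^M=x^{N}=1$, so $x^S\in\mu_M$. Since $\mu_M$ is a subgroup, $x^T=(x^S)^h\in\mu_M$. This uses neither the choice of $h$ nor the hypothesis $\gcd(s,M)=1$.

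For (ii), we need $hS\equiv q-1\pmod{q^n-1}$. This is legitimate since $q^n-1$ divides $N=q^{ns}-1$, so reducing $T$ modulo $N$ does not affect its class modulo $q^n-1$. Write
\[
S=\frac{q^{ns}-1}{q^n-1}\cdot\frac{q^n-1}{M}=(q-1)\bigl(1+q^n+q^{2n}+\cdots+q^{(s-1)n}\bigr).
\]
Since $q^n\equiv 1\pmod{q^n-1}$, the second factor is congruent to $s$ modulo $q^n-1$. Reducing modulo $q^n-1=(q-1)M$ gives
\[
S\equiv (q-1)s \pmod{(q-1)M}.
\]
Hence $hS\equiv (q-1)hs\pmod{(q-1)M}$. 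From $hs\equiv 1\pmod M$ we get $(q-1)hs\equiv q-1\pmod{(q-1)M}$. Therefore $T\equiv q-1\pmod{q^n-1}$.

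The consequence then follows at once. For $z\in L^*$ we have $z^{q^n-1}=1$, and exponents of $z$ only matter modulo $q^n-1$, so $z^T=z^{q-1}$.

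The only step needing care is the last congruence: multiplying a congruence modulo $M$ by $q-1$ yields a congruence modulo $(q-1)M$. This is exactly where the hypothesis $\gcd(s,M)=1$, through the existence of $h$, enters. Everything else is routine.
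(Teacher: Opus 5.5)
Your proof is correct and follows the paper's argument essentially step for step. Part (i) is the paper's observation that $MT\equiv hN\equiv 0\pmod N$, which you phrase as $x^T=(x^S)^h$ with $x^S\in\mu_M$; part (ii) uses the same factorization $S=(q-1)(1+q^n+\cdots+q^{n(s-1)})$ followed by multiplying $hs\equiv 1\pmod M$ by $q-1$. Your explicit remark that this multiplication yields a congruence modulo $(q-1)M=q^n-1$ is a slightly cleaner justification than the paper's phrase ``$M$ divides $q^n-1$''.
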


\begin{proof}
Since $M T\equiv M hS=hN\equiv0\pmod N$, every $x^T$ has order dividing $M$.  Hence $x^T\in \mu_M$.

For the second assertion, observe that
\[
S=\frac{q^{ns}-1}{M}=(q-1)(1+q^n+q^{2n}+\cdots+q^{n(s-1)}).
\]
Modulo $q^n-1$ we have $q^n\equiv1$, and therefore
\[
        S\equiv s(q-1)\pmod {q^n-1}.
\]
Multiplying by $h$ gives
\[
        T\equiv hS\equiv hs(q-1)\equiv q-1\pmod {q^n-1},
\]
because $hs\equiv1\pmod M$ and $M$ divides $q^n-1$.  The final claim follows from $|L^*|=q^n-1$.
\end{proof}

For $0\le i\le n-1$, define
\[
        e_i=\frac{q^i-1}{q-1}=1+q+\cdots+q^{i-1},
\]
with the convention $e_0=0$.

\begin{lemma}\label{lem:freq}
For all $i,j$, one has
\[
        e_i+q^i e_j\equiv e_{i+j}\pmod M,
\]
where the subscript $i+j$ is taken modulo $n$.
\end{lemma}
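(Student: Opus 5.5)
The plan is to verify the congruence first as an exact integer identity in the case $0\le i,j$ with $i+j\le n-1$, and then to handle the wrap-around case $i+j\ge n$ using $q^n\equiv 1\pmod M$. Throughout, $i$ and $j$ are taken in $\{0,\dots,n-1\}$. Since $e_k$ is only defined for $0\le k\le n-1$, the subscript $i+j$ means its least nonnegative residue modulo $n$.

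The key identity, valid for all nonnegative integers $a,b$, is
\[
  \frac{q^a-1}{q-1}+q^a\,\frac{q^b-1}{q-1}=\frac{q^{a+b}-1}{q-1}.
\]
This is immediate from $q^a-1+q^{a+b}-q^a=q^{a+b}-1$. Equivalently, $1+q+\cdots+q^{a-1}$ plus $q^a(1+\cdots+q^{b-1})$ is the full geometric sum up to $q^{a+b-1}$. With $a=i$ and $b=j$, this gives $e_i+q^ie_j=(q^{i+j}-1)/(q-1)$ exactly. If $i+j\le n-1$, the right-hand side is $e_{i+j}$, and we are done, even with equality.

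If $n\le i+j\le 2n-2$, write $i+j=n+k$ with $0\le k\le n-2$. Apply the identity again with $a=n$ and $b=k$:
\[
  \frac{q^{n+k}-1}{q-1}=M+q^n e_k .
\]
Now $q^n-1=(q-1)M$, so $q^n\equiv1\pmod M$. Hence this expression is congruent to $e_k$ modulo $M$, which is the required $e_{(i+j)\bmod n}$.

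Nothing here is really an obstacle. The only point needing care is the bookkeeping of the reduction of the subscript modulo $n$, together with the observation that $q^n\equiv 1\pmod M$, which comes from $M(q-1)=q^n-1$. If one wants the statement for arbitrary integers $i,j\ge0$, one can either reduce them modulo $n$ first, using $q^i\bmod M$ depending only on $i\bmod n$, or extend $e_k$ to all $k\ge0$. Then $e_{k+n}=M+q^ne_k\equiv e_k\pmod M$, so the residue class of $e_k$ modulo $M$ is $n$-periodic in $k$, and the identity above gives the result directly.
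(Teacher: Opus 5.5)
Your proof is correct and follows essentially the same route as the paper. Both prove the exact integer identity $e_i+q^ie_j=(q^{i+j}-1)/(q-1)$, which settles the case $i+j<n$, and for $i+j\ge n$ both use the decomposition $(q^{i+j}-1)/(q-1)=M+q^n e_{i+j-n}$ together with $q^n\equiv 1\pmod M$.
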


\begin{proof}
As integers,
\[
        e_i+q^i e_j=(1+q+\cdots+q^{i-1})+q^i(1+q+\cdots+q^{j-1})=e_{i+j}
\]
when $i+j<n$.  If $i+j\ge n$, then $e_{i+j}=e_n+q^n e_{i+j-n}$.  Since $e_n=M$ and $q^n\equiv1\pmod M$, we get $e_{i+j}\equiv e_{i+j-n}\pmod M$.  This proves the assertion.
\end{proof}

\section{ Lifting linearized permutations and descent}\label{sec:main}

Let
\[
        \mathcal L_A(X)=\sum_{i=0}^{n-1}a_i X^{q^i}\in L[X]
\]
be a $q$-linearized polynomial over $L$.  Assume that $\mathcal L_A$ is a permutation polynomial of $L$, and write its inverse as
\[
        \mathcal L_A^{-1}(X)=\mathcal L_B(X)=\sum_{i=0}^{n-1}A_i X^{q^i}.
\]

\begin{theorem}[Higher unit-torus lift]\label{thm:forward}
Let $r$ be an integer satisfying
\[
        \gcd(r,N)=1,
        \qquad
        r\equiv1\pmod {q^n-1}.
\]
Let $r'$ be the inverse of $r$ modulo $N$.  Define
\[
        F_A(x)=x^r\sum_{i=0}^{n-1}a_i x^{e_iT}
        =\sum_{i=0}^{n-1}a_i x^{r+e_iT}\in K[x]
\]
and
\[
        F_B(x)=x^{r'}\sum_{i=0}^{n-1}A_i x^{e_iT}
        =\sum_{i=0}^{n-1}A_i x^{r'+e_iT}\in K[x].
\]
Then $F_A$ is a permutation polynomial of $K$, and
\[
        F_A^{-1}=F_B.
\]
\end{theorem}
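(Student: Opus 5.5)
Since $K$ is finite, it is enough to show $F_B(F_A(x))=x$ for every $x\in K$; this makes $F_A$ injective, hence a permutation of $K$, with $F_B$ as its inverse map. The case $x=0$ is trivial: every exponent $r+e_iT$ and $r'+e_iT$ is positive once we take representatives in $[1,N]$, which we may do as the statement works with exponents modulo $N$. So fix $x\in K^*$ and put $\zeta=x^T$. By \cref{lem:Tproperties}(i), $\zeta\in\mu_M$. The central computation is to write
\[
F_A(x)=x^r\sum_i a_i\zeta^{e_i}
\]
and to understand both the prefactor $x^r$ and the sum.

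The key step is to show that $\zeta$ has the form $\zeta=w^{q-1}$ with $w\in L^*$, and that then $\zeta^{e_i}=w^{q^i-1}$. The first claim holds because $\mu_M\subseteq L^*$ (as $M\mid q^n-1$) and $\mu_M$ is exactly the image of $w\mapsto w^{q-1}$ on $L^*$. The second claim follows from $e_i(q-1)=q^i-1$. With this, the sum becomes
\[
\sum_i a_i w^{q^i-1}=w^{-1}\mathcal L_A(w)=:u.
\]
Since $\mathcal L_A$ permutes $L$, $u\in L^*$. Thus $F_A(x)=x^r u$, where $u\in L^*$ depends only on $\zeta$.

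Next set $y=F_A(x)=x^r u$ and compute $y^T$. We have $y^T=x^{rT}u^T$. By \cref{lem:Tproperties}(ii), $u^T=u^{q-1}$. Since $r\equiv 1\pmod{q^n-1}$ and $x^T\in\mu_M$, which has exponent dividing $q^n-1$, we get $x^{rT}=\zeta^r=\zeta$. Hence
\[
y^T=w^{q-1}u^{q-1}=(wu)^{q-1}=\mathcal L_A(w)^{q-1}.
\]
Put $v=\mathcal L_A(w)\in L^*$. Running the same computation for $F_B$ at $y$ with $w$ replaced by $v$ gives
\[
F_B(y)=y^{r'}v^{-1}\mathcal L_B(v)=y^{r'}v^{-1}w,
\]
using $\mathcal L_B(\mathcal L_A(w))=w$. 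Finally,
\[
y^{r'}=x^{rr'}u^{r'}=x\,u^{r'}.
\]
Since $r r'\equiv1\pmod N$ and $r\equiv1\pmod{q^n-1}$, we have $r'\equiv1\pmod{q^n-1}$ (note $q^n-1\mid N$), so $u^{r'}=u$. Therefore
\[
F_B(y)=x\,u\,v^{-1}w=x\cdot\frac{\mathcal L_A(w)}{w}\cdot\frac{w}{\mathcal L_A(w)}=x.
\]

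The main obstacle I expect is bookkeeping rather than ideas. One must consistently reduce exponents modulo $N$ versus $q^n-1$ versus $M$, and apply \cref{lem:Tproperties} only to elements of $L^*$. In particular, $\zeta^r=\zeta$ and $u^{r'}=u$ must be justified through $r,r'\equiv1\pmod{q^n-1}$. One must also check that the choice of $w$ (unique only up to $\F_q^*$) does not matter; this holds because $\mathcal L_A$ is $\F_q$-linear, so $w^{-1}\mathcal L_A(w)$ is invariant under $w\mapsto cw$ with $c\in\F_q^*$. \Cref{lem:freq} is not needed on this route, since we use $e_i(q-1)=q^i-1$ directly.
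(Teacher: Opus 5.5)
Your proof is correct, and it takes a different route from the paper.

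\textbf{The paper's route.} The paper proves $F_A\circ F_B=\mathrm{id}$ at the level of coefficients. With $v=y^T$ and $R(v)=\sum_j A_jv^{e_j}$, it reduces $F_A(F_B(y))$ to $y\sum_i a_iv^{e_i}R(v)^{q^i}$. It then expands this as $\sum_{i,j}a_iA_j^{q^i}v^{e_i+q^ie_j}$. Finally it uses Lemma~\ref{lem:freq} ($e_i+q^ie_j\equiv e_{i+j}\pmod M$) to recognize the coefficients of the reduced composition $\mathcal L_A\circ\mathcal L_B=X$.

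\textbf{Your route.} You parametrize the torus as $\mu_M=\{w^{q-1}: w\in L^*\}$, with kernel $\F_q^*$. This turns the inner sum into $\mathcal L_A(w)/w$, so that $F_A(x)=x^r\mathcal L_A(w)/w$ and $F_A(x)^T=\mathcal L_A(w)^{q-1}$. In other words, $x\mapsto x^T$ intertwines $F_A$ with the map $[w]\mapsto[\mathcal L_A(w)]$ that $\mathcal L_A$ induces on $L^*/\F_q^*\cong\mu_M$. The inverse property then becomes the pointwise identity $\mathcal L_B(\mathcal L_A(w))=w$.

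Your concern about the choice of $w$ does no harm but is unnecessary. The element $u=\sum_i a_i\zeta^{e_i}$ is defined from $\zeta$ alone, and $u=\mathcal L_A(w)/w$ holds for every preimage $w$.

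\textbf{What each approach buys.} Your argument is more conceptual: it explains why the exponents $e_i$ are the right ones. It also makes Lemma~\ref{lem:freq} unnecessary and involves no coefficient bookkeeping. The paper's coefficient formulation pays off in Theorem~\ref{thm:descent}. There the relation $\sum_k C_kv^{e_k}=1$ on $\mu_M$, together with linear independence of the characters $v^{e_k}$, yields the composition identity without knowing in advance that $\mathcal L_B$ is injective. Your pointwise version would need an extra argument there to guarantee $\mathcal L_B(v)\neq0$, for instance from injectivity of $F_B$.
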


\begin{proof}
It is enough to prove $F_A(F_B(y))=y$ for all $y\in K$.  The identity is clear for $y=0$, so assume $y\in K^*$.  Put
\[
        v=y^T\in\mu_M
\]
and
\[
        R(v)=\sum_{j=0}^{n-1}A_j v^{e_j}\in L.
\]
Then
\[
        F_B(y)=y^{r'}R(v).
\]
Since $rr'\equiv1\pmod N$ and $R(v)\in L$, while $r\equiv1\pmod {q^n-1}$, we get
\[
        F_B(y)^r=yR(v).
\]
Moreover, $r'\equiv1\pmod M$, and by Lemma~\ref{lem:Tproperties},
\[
        F_B(y)^T=y^{r'T}R(v)^T=vR(v)^{q-1}.
\]
Therefore
\[
\begin{aligned}
F_A(F_B(y))
&=F_B(y)^r\sum_{i=0}^{n-1}a_i\bigl(F_B(y)^T\bigr)^{e_i} \\
&=yR(v)\sum_{i=0}^{n-1}a_i\left(vR(v)^{q-1}\right)^{e_i}.
\end{aligned}
\]
Since $(q-1)e_i=q^i-1$, this becomes
\[
        F_A(F_B(y))=y\sum_{i=0}^{n-1}a_i v^{e_i}R(v)^{q^i}.
\]
Now expand
\[
        R(v)^{q^i}=\sum_{j=0}^{n-1}A_j^{q^i}v^{q^i e_j}.
\]
By Lemma~\ref{lem:freq}, the exponent $e_i+q^ie_j$ reduces modulo $M$ to $e_{i+j}$.  Hence
\[
\sum_{i=0}^{n-1}a_i v^{e_i}R(v)^{q^i}
=
\sum_{i,j}a_i A_j^{q^i}v^{e_{i+j}}.
\]
The coefficients in this expression are precisely the coefficients of the $q$-linearized composition
\[
        \mathcal L_A(\mathcal L_B(X)).
\]
Since $\mathcal L_B=\mathcal L_A^{-1}$ over $L$, this composition equals $X$.  Thus the above expression equals $1$, and consequently $F_A(F_B(y))=y$.  The same argument, or bijectivity, gives $F_B(F_A(y))=y$.
\end{proof}

The next result gives a converse aspect of the lifting construction.  It shows that the construction is not merely sufficient: whenever a polynomial map written with the same prescribed set of exponents has an inverse using that same set of exponents, its coefficient data descend to a pair of mutually inverse $q$-linearized polynomials over $L$.

\begin{theorem}[Descent theorem]\label{thm:descent}
Keep the notation and assumptions on $K,L,M,T$ and $r$ as above.  Let
\[
        F_A(x)=x^r\sum_{i=0}^{n-1}a_i x^{e_iT},
        \qquad
        F_B(x)=x^{r'}\sum_{i=0}^{n-1}A_i x^{e_iT},
\]
where $a_i,A_i\in L$ and $rr'\equiv1\pmod N$.  If
\[
        F_A(F_B(x))=x
\]
as a function on $K$, then
\[
        \left(\sum_{i=0}^{n-1}a_iX^{q^i}\right)
        \circ
        \left(\sum_{i=0}^{n-1}A_iX^{q^i}\right)
        =X
\]
as a function on $L$. The coefficient data of the nonlinear inverse pair descend to a pair of mutually inverse $q$-linearized permutation polynomials over $L$.
\end{theorem}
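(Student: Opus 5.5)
The plan is to rerun the computation in the proof of Theorem~\ref{thm:forward} without using the hypothesis that $\mathcal L_B=\mathcal L_A^{-1}$, and then read the hypothesis $F_A(F_B(x))=x$ backwards. For $y\in K^*$ put $v=y^T\in\mu_M$ and $R(v)=\sum_j A_j v^{e_j}$. This lies in $L$ because $A_j\in L$ and $\mu_M\subset L^*$. The steps of that proof used only $rr'\equiv1\pmod N$, $r\equiv1\pmod{q^n-1}$, Lemma~\ref{lem:Tproperties} and Lemma~\ref{lem:freq}, never the inverse relation. They give
\[
F_A(F_B(y))=y\sum_{k=0}^{n-1}c_k v^{e_k},\qquad c_k=\sum_{i+j\equiv k \ (\mathrm{mod}\ n)} a_iA_j^{q^i}.
\]
Here the $c_k$ are exactly the coefficients of the composition $\mathcal L_A(\mathcal L_B(X))$ after reducing modulo $X^{q^n}-X$, i.e.\ of $\sum_k c_kX^{q^k}$. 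The hypothesis then says that $P(v):=\sum_k c_k v^{e_k}$ satisfies $P(v)=1$ for every $v$ in the image of $y\mapsto y^T$ on $K^*$.

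The first key step is to show that this image is all of $\mu_M$. Since $T\equiv hS$ and $S=N/M$, the map $y\mapsto y^S$ sends the cyclic group $K^*$ of order $N$ onto its unique subgroup $\mu_M$ of order $M$. Moreover $hs\equiv1\pmod M$ forces $\gcd(h,M)=1$, so $w\mapsto w^h$ is a bijection of $\mu_M$. Hence $\{y^T:y\in K^*\}=\mu_M$, and therefore $P(v)=1$ for all $M$ elements $v\in\mu_M$.

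The second step is a degree argument. The exponents satisfy $0=e_0<e_1<\cdots<e_{n-1}<e_n=M$, so $P(X)-1=(c_0-1)+\sum_{k\ge1}c_kX^{e_k}$ is a polynomial over $L$ of degree less than $M$. It vanishes at the $M$ distinct points of $\mu_M$, so it is the zero polynomial. Because the $e_k$ are distinct, this gives $c_0=1$ and $c_k=0$ for $1\le k\le n-1$.

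Consequently $\mathcal L_A\circ\mathcal L_B\equiv X\pmod{X^{q^n}-X}$, which means $\mathcal L_A(\mathcal L_B(z))=z$ for all $z\in L$. Since $L$ is finite, both maps are then bijections and inverse to each other, which is the descent statement.

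I expect the only delicate points to be the surjectivity of $y\mapsto y^T$ onto $\mu_M$, which rests on $\gcd(h,M)=1$, and the observation that the $e_k$ are distinct integers below $M$. Everything else is a transcription of the forward computation.
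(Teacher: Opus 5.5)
Your proof is correct and follows the paper's argument: you rerun the forward computation to get $\sum_k c_k v^{e_k}=1$ on $\mu_M$, then conclude from the degree bound $e_{n-1}<M$ against the $M$ distinct roots in $\mu_M$. Your explicit check that $y\mapsto y^T$ maps $K^*$ onto $\mu_M$ (via $\gcd(h,M)=1$) fills in a step the paper uses tacitly when it passes from ``for all $y\ne0$'' to ``for all $v\in\mu_M$''.
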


\begin{proof}
As in the proof of Theorem~\ref{thm:forward}, for $v\in\mu_M$ put
\[
        R(v)=\sum_{j=0}^{n-1}A_jv^{e_j}.
\]
The identity $F_A(F_B(y))=y$ for $y\ne0$ implies
\[
        \sum_{i=0}^{n-1}a_i v^{e_i}R(v)^{q^i}=1
        \qquad (v\in\mu_M).
\]
Expanding gives
\[
        \sum_{k=0}^{n-1} C_k v^{e_k}=1,
\]
where $C_k$ is exactly the coefficient of $X^{q^k}$ in the $q$-linearized composition
\[
        \mathcal L_A(\mathcal L_B(X)).
\]
The characters $v^{e_0},\ldots,v^{e_{n-1}}$ are distinct on $\mu_M$, because
\[
        0=e_0<e_1<\cdots<e_{n-1}<e_n=M.
\]
They are linearly independent as functions on $\mu_M$.  Indeed, if
\[
        \sum_{i=0}^{n-1}\lambda_i v^{e_i}=0
        \qquad\text{for all }v\in\mu_M,
\]
then the polynomial
\[
        P(X)=\sum_{i=0}^{n-1}\lambda_iX^{e_i}
\]
has all elements of $\mu_M$ as roots.  Since $\deg P<M$ and $X^M-1$ is separable, this forces $P=0$, and hence all $\lambda_i=0$.  Therefore $C_0=1$ and $C_k=0$ for $1\le k\le n-1$.  Thus the $q$-linearized composition equals $X$ on $L$.
\end{proof}

\begin{remark}
For $n=2$, the construction involves only two prescribed exponents, namely 0 and 1, and therefore produces permutation binomials whose compositional inverses are again binomials. For $n=3$, the prescribed exponents are $0$, $1$, and $q+1$, leading naturally to permutation trinomials whose compositional inverses are again trinomials.
\end{remark}

\section{The binomial case}\label{sec:binomial}

We now specialize to $n=2$.  Then
\[
        M=q+1,
        \qquad
        e_0=0,
e_1=1.
\]
Let
\[
        L_{a,b}(X)=aX+bX^q\in \F_{q^2}[X].
\]
The Dickson matrix is
\[
        \begin{pmatrix} a&b\\ b^q&a^q\end{pmatrix},
\]
and
\[
        D=a^{q+1}-b^{q+1}.
\]
The polynomial $L_{a,b}$ permutes $\F_{q^2}$ if and only if $D\ne0$, in which case
\[
        L_{a,b}^{-1}(X)=\frac{a^q}{D}X-\frac{b}{D}X^q.
\]

In this case, Theorem~\ref{thm:forward} yields the following corollary:

\begin{corollary}\label{cor:binomial}
Let $K=\F_{q^{2s}}$, assume $\gcd(s,q+1)=1$, and define $T$ as in Section~\ref{sec:setup}.  Let $r$ satisfy
\[
        \gcd(r,q^{2s}-1)=1,
        \qquad
        r\equiv1\pmod {q^2-1},
\]
and let $r'$ be its inverse modulo $q^{2s}-1$.  If $a,b\in\F_{q^2}^*$ satisfy
\[
        D=a^{q+1}-b^{q+1}\ne0,
\]
then
\[
        F(x)=a x^r+b x^{r+T}
\]
permutes $K$, and
\[
        F^{-1}(x)=\frac{a^q}{D}x^{r'}-\frac{b}{D}x^{r'+T}.
\]
In characteristic two, this reads
\[
        F^{-1}(x)=\frac{a^q}{a^{q+1}+b^{q+1}}x^{r'}+\frac{b}{a^{q+1}+b^{q+1}}x^{r'+T}.
\]
\end{corollary}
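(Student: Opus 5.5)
The plan is to obtain the corollary as the specialization $n=2$ of Theorem~\ref{thm:forward}. With $n=2$ we have $L=\F_{q^2}$, $M=q+1$, $N=q^{2s}-1$, $e_0=0$ and $e_1=1$. The standing hypothesis $\gcd(s,M)=1$ becomes $\gcd(s,q+1)=1$, so $h$ and $T$ are defined exactly as in Section~\ref{sec:setup}. The conditions on $r$ in the corollary are literally those of Theorem~\ref{thm:forward}, namely $\gcd(r,N)=1$ and $r\equiv 1\pmod{q^2-1}$. Taking $a_0=a$ and $a_1=b$, the polynomial $F_A(x)=a_0x^{r+e_0T}+a_1x^{r+e_1T}$ is exactly $F(x)=ax^r+bx^{r+T}$.

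Next I will verify the inverse of the linearized input. Since $D\neq0$, I would check directly that $\mathcal L_B(X)=\frac{a^q}{D}X-\frac{b}{D}X^q$ inverts $L_{a,b}$ on $\F_{q^2}$. Composing $L_{a,b}\circ\mathcal L_B$ and using $X^{q^2}=X$ on $\F_{q^2}$ gives a coefficient of $X$ equal to $\frac{a\cdot a^q-b\cdot b^q}{D}=1$. It also uses $D^q=D$, which holds because $D\in\F_q$: the norms $a^{q+1}$ and $b^{q+1}$ lie in $\F_q$. The coefficient of $X^q$ is $\frac{-ab+ba}{D}=0$. So $A_0=a^q/D$ and $A_1=-b/D$.

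Theorem~\ref{thm:forward} then gives that $F$ permutes $K$ with $F^{-1}=F_B=A_0x^{r'}+A_1x^{r'+T}$, which is the displayed formula. In characteristic two, $-1=1$, so $D=a^{q+1}+b^{q+1}$ and the minus sign becomes a plus, giving the second formula.

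I expect no real obstacle. The only points needing care are the check $D^q=D$ in the composition, and the observation that the hypothesis $a,b\neq0$ is not needed for the argument; it only guarantees that $F$ is a genuine binomial.
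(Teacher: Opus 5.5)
Your proposal is correct and follows the paper's route exactly. Like the paper, you obtain the corollary as the $n=2$ case of Theorem~\ref{thm:forward} (with $a_0=a$, $a_1=b$, $e_0=0$, $e_1=1$), using the inverse $L_{a,b}^{-1}(X)=\frac{a^q}{D}X-\frac{b}{D}X^q$; the only difference is that you verify this inverse by direct composition, using $D^q=D$ and $X^{q^2}=X$ on $\F_{q^2}$, whereas the paper simply quotes it from the Dickson matrix.
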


Here is an example. 

\begin{example}\label{ex:q4binomial}
Assume that $q$ is even and let $K=\F_{q^4}$.  The following permutation binomial family was studied in~\cite{LiFengWangBinomials}; We now give an explicit formula for its compositional inverse.
  Here $s=2$ and
\[
        M=q+1,
        \qquad
        N=q^4-1,
        \qquad
        S=\frac{q^4-1}{q+1}=(q-1)(q^2+1).
\]
Since $q+1$ is odd, the inverse of $2$ modulo $q+1$ is $(q+2)/2$.  Hence
\[
        T\equiv \frac{q+2}{2}S\pmod {q^4-1}.
\]
A direct calculation gives
\[
        T+1=\frac{q^4+q^3-q^2+q}{2}=:E.
\]
Taking $r=1$, $b=1$, and $a\in\F_{q^2}^*$ with $a^{q+1}\ne1$, Corollary~\ref{cor:binomial} gives
\[
        f_a(x)=x^E+a x
\]
as a permutation polynomial of $\F_{q^4}$, with compositional inverse
\[
        f_a^{-1}(x)=\frac{a^q}{1+a^{q+1}}x+\frac{1}{1+a^{q+1}}x^E.
\]
\end{example}

Furthermore, we give an explicit criterion for the resulting permutation polynomials to be involutions.

\begin{corollary}\label{cor:involution}
Keep the notation of Theorem~\ref{cor:binomial}.  Suppose in addition that
\[
        r^2\equiv1\pmod {q^{2s}-1}
\]
and that the linear binomial $L_{a,b}(X)=aX+bX^q$ is an involution of $\F_{q^2}$, i.e.
\[
        a^2+b^{q+1}=1,
        \qquad
        b(a+a^q)=0.
\]
Then
\[
        F(x)=a x^r+b x^{r+T}
\]
is an involution of $K$.
In particular, if $q$ is even, $a\in\F_q$, $a\ne0,1$, and $b\in\F_{q^2}^*$ satisfies
\[
        b^{q+1}=1+a^2,
\]
then $F$ is an involution whenever $r^2\equiv1\pmod {q^{2s}-1}$.
\end{corollary}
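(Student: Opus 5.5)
The plan is to apply Corollary~\ref{cor:binomial} and compare the inverse it gives with $F$ itself. First, I would check that the involution conditions on $L_{a,b}$ are exactly the conditions making the inverse formula return $(a,b)$. Composing $L_{a,b}$ with itself gives
\[
L_{a,b}(L_{a,b}(X))=(a^2+b^{q+1})X+b(a+a^q)X^q .
\]
So $L_{a,b}$ is an involution of $\F_{q^2}$ if and only if $a^2+b^{q+1}=1$ and $b(a+a^q)=0$. In that case $L_{a,b}^{-1}=L_{a,b}$, so in the notation of Section~\ref{sec:main} we may take $A_0=a$ and $A_1=b$.

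Second, I would use $r^2\equiv1\pmod{q^{2s}-1}$ to conclude $r'\equiv r$ modulo $N$. Then $x^{r'}=x^r$ and $x^{r'+T}=x^{r+T}$ as functions on $K$. Theorem~\ref{thm:forward} gives $F^{-1}=F_B=A_0x^{r'}+A_1x^{r'+T}=ax^r+bx^{r+T}=F$, so $F$ is an involution. One can avoid comparing with the explicit formula $a^q/D$, $-b/D$. Instead, apply Theorem~\ref{thm:forward} directly with $\mathcal L_B=\mathcal L_A$, which requires only $D\neq0$. That holds automatically, since an involution is a bijection. One small point: Corollary~\ref{cor:binomial} assumes $a,b\neq0$, but Theorem~\ref{thm:forward} does not, so the degenerate cases are covered by citing the theorem.

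Third, for the characteristic-two special case, I would verify the two conditions. Since $a\in\F_q$ we have $a^q=a$, so $a+a^q=2a=0$. Also $a^2+b^{q+1}=a^2+1+a^2=1$ in characteristic $2$. The hypotheses $a\ne0,1$ ensure $b^{q+1}=1+a^2\neq0$, so $b\neq0$ can exist. Such $b$ exist because the norm map $\F_{q^2}^*\to\F_q^*$ is surjective.

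I do not expect a real obstacle. The only care needed is to justify $r'=r$ modulo $N$, which follows from uniqueness of inverses modulo $N$ together with $\gcd(r,N)=1$.
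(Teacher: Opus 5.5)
Your proposal is correct and follows the same route as the paper: $r^2\equiv1\pmod{N}$ forces $r'\equiv r \pmod N$, and applying Theorem~\ref{thm:forward} with $\mathcal L_B=\mathcal L_A=L_{a,b}$ gives $F^{-1}=F$. You also verify the composition $L_{a,b}\circ L_{a,b}=(a^2+b^{q+1})X+b(a+a^q)X^q$ and check the characteristic-two special case, both of which the paper leaves implicit.
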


\begin{proof}
If $r^2\equiv1\pmod {q^{2s}-1}$, then $r'=r$.  Applying Theorem~\ref{thm:forward} to the equality $L_{a,b}^{-1}=L_{a,b}$ gives $F^{-1}=F$.
\end{proof}

The following example gives a permutation binomial that is an involution.

\begin{example}\label{ex:binomial-involution}
Let $q=4$, $s=3$, and $K=\F_{q^{2s}}=\F_{2^{12}}$.  Then
\[
        N=q^{2s}-1=4095,\qquad M=q+1=5,
\]
and
\[
        S=N/M=819.
\]
Since $2s=6\equiv1\pmod 5$, one may take $h=2$, and hence
\[
        T=2S=1638.
\]
Let $\beta$ be a primitive element of $\F_{16}$ and set $\omega=\beta^5\in\F_4^*$.  Then $\omega^2+\omega+1=0$.  Take
\[
        a=\omega,
        \qquad
        b=\beta,
        \qquad
        r=181.
\]
One has
\[
        r\equiv1\pmod {15},\qquad r^2\equiv1\pmod {4095},
\]
and
\[
        b^{q+1}=\beta^5=\omega=1+\omega^2=1+a^2.
\]
Thus Corollary~\ref{cor:involution} gives the nonlinear binomial involution
\[
        I(x)=\omega x^{181}+\beta x^{1819}\in\F_{2^{12}}[x].
\]
Its compositional inverse is itself:
\[
        I^{-1}(x)=I(x)=\omega x^{181}+\beta x^{1819}.
\]
The exponents $181$ and $1819$ are not powers of $2$ modulo $4095$, so this is not a linearized binomial.
\end{example}

\section{The trinomial case}\label{sec:trinomial}

We now specialize to $n=3$.  Then
\[
        M=q^2+q+1,
        \qquad
        e_0=0,
e_1=1,
e_2=q+1.
\]
Let
\[
        \mathcal L(X)=aX+bX^q+cX^{q^2}\in\F_{q^3}[X].
\]
Its Dickson matrix is
\[
        \mathcal D(a,b,c)=
        \begin{pmatrix}
        a & b & c\\
        c^q & a^q & b^q\\
        b^{q^2} & c^{q^2} & a^{q^2}
        \end{pmatrix}.
\]
Put
\[
\Delta=
 a^{1+q+q^2}+b^{1+q+q^2}+c^{1+q+q^2}
-ab^q c^{q^2}-bc^q a^{q^2}-ca^q b^{q^2}.
\]
Then $\mathcal L$ permutes $\F_{q^3}$ if and only if $\Delta\ne0$.

When $\Delta\ne0$, the inverse has the form
\[
        \mathcal L^{-1}(X)=AX+BX^q+CX^{q^2},
\]
where
\[
        A=\frac{a^{q+q^2}-b^qc^{q^2}}{\Delta},
\]
\[
        B=\frac{c^{q^2+1}-a^{q^2}b}{\Delta},
\]
and
\[
        C=\frac{b^{q+1}-ca^q}{\Delta}.
\]
These are the entries in the first row of $\mathcal D(a,b,c)^{-1}$.

In this case, Theorem~\ref{thm:forward} yields the following corollary:

\begin{corollary}\label{cor:trinomial}
Let $K=\F_{q^{3s}}$ and assume $\gcd(s,q^2+q+1)=1$.  Define $T$ as in Section~\ref{sec:setup}.  Let $r$ satisfy
\[
        \gcd(r,q^{3s}-1)=1,
        \qquad
        r\equiv1\pmod {q^3-1},
\]
and let $r'$ be the inverse of $r$ modulo $q^{3s}-1$.  If $a,b,c\in\F_{q^3}$ satisfy $\Delta\ne0$, then
\[
        F(x)=a x^r+b x^{r+T}+c x^{r+(q+1)T}
\]
permutes $K$, and
\[
        F^{-1}(x)=A x^{r'}+B x^{r'+T}+C x^{r'+(q+1)T},
\]
with $A,B,C$ as above.
\end{corollary}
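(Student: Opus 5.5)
The plan is to derive Corollary~\ref{cor:trinomial} as the case $n=3$ of Theorem~\ref{thm:forward}. Three things must be checked: the hypotheses match, the exponents $r+e_iT$ agree with those in the statement, and $AX+BX^q+CX^{q^2}$ really is the compositional inverse of $\mathcal L=aX+bX^q+cX^{q^2}$ on $\F_{q^3}$.

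\textbf{Hypotheses and exponents.} With $n=3$ we have $M=q^2+q+1$ and $N=q^{3s}-1$. The assumptions $\gcd(s,q^2+q+1)=1$, $\gcd(r,N)=1$ and $r\equiv 1\pmod{q^3-1}$ are exactly those of Section~\ref{sec:setup} and Theorem~\ref{thm:forward}. Since $e_0=0$, $e_1=1$ and $e_2=q+1$, the polynomials $F_A$ and $F_B$ of Theorem~\ref{thm:forward} are precisely the $F$ and the claimed $F^{-1}$, with $(a_0,a_1,a_2)=(a,b,c)$ and $(A_0,A_1,A_2)=(A,B,C)$.

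\textbf{The linear inverse.} First, $\Delta$ is the determinant of the Dickson matrix $\mathcal D(a,b,c)$. Expanding along the first row gives
\[
a(a^{q+q^2}-b^qc^{q^2})-b(c^qa^{q^2}-b^{q+q^2})+c(c^{q+q^2}-a^qb^{q^2}),
\]
which is the stated expression for $\Delta$. The standard fact that $\mathcal L$ permutes $\F_{q^3}$ iff $\det\mathcal D\neq 0$ gives bijectivity. For the inverse, I use that composition of $q$-linearized polynomials reduced modulo $X^{q^3}-X$ corresponds to multiplication of Dickson matrices. Hence the inverse linearized polynomial corresponds to $\mathcal D^{-1}$, which is again a Dickson matrix, and its coefficients form the first row of $\mathcal D^{-1}$. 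By the adjugate formula, the first row of $\mathcal D^{-1}$ is $\Delta^{-1}$ times the cofactors of the first column:
\[
(a^{q+q^2}-b^qc^{q^2},\; -(b\,a^{q^2}-c^{q^2+1}),\; b^{q+1}-c\,a^q).
\]
These are exactly $A$, $B$ and $C$.

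\textbf{Direct check and conclusion.} If one prefers to avoid the Dickson-matrix correspondence, one can verify $\mathcal L(AX+BX^q+CX^{q^2})\equiv X$ directly. The coefficient of $X^{q^k}$ in the composition is $\sum_{i+j\equiv k}a_iA_j^{q^i}$, so this means checking $aA+bC^q+cB^{q^2}=\Delta$ and that the other two coefficient sums vanish after multiplying through by $\Delta$ (using that $\Delta\in\F_q$). This cofactor bookkeeping is the only real obstacle, and it is purely routine. Once $\mathcal L^{-1}=AX+BX^q+CX^{q^2}$ is established, Theorem~\ref{thm:forward} shows that $F$ permutes $K$ with $F^{-1}$ as stated.
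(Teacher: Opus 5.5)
Your proposal is correct and is essentially the paper's proof: you specialize Theorem~\ref{thm:forward} to $n=3$ with $e_2=q+1$, and you obtain $A,B,C$ as the first row of $\mathcal D(a,b,c)^{-1}$; your adjugate computation and the correspondence between composition and Dickson-matrix multiplication are both correct. The only slip is cosmetic and sits in your optional direct check: with $A,B,C$ as defined (already divided by $\Delta$), the identity to verify is $aA+bC^q+cB^{q^2}=1$, and it is the numerators of $A,B,C$ that produce $\Delta$.
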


\begin{proof}
This is Theorem~\ref{thm:forward} for $n=3$.  The displayed formulas for $A,B,C$ are obtained by inverting the Dickson matrix of $\mathcal L(X)=aX+bX^q+cX^{q^2}$.
\end{proof}

Similarly, we can also give an explicit criterion for the resulting permutation polynomials to be involutions.

\begin{corollary}\label{cor:trinomial-involution}
Keep the notation of Theorem~\ref{cor:trinomial}.  Suppose in addition that
\[
        r^2\equiv1\pmod {q^{3s}-1}
\]
and that the $q$-linearized trinomial
\[
        \mathcal L(X)=aX+bX^q+cX^{q^2}
\]
is an involution of $\F_{q^3}$.  Equivalently,
\[
        a^2+bc^q+cb^{q^2}=1,
\]
\[
        ab+ba^q+cc^{q^2}=0,
\]
and
\[
        ac+bb^q+ca^{q^2}=0.
\]
Then
\[
        F(x)=a x^r+b x^{r+T}+c x^{r+(q+1)T}
\]
is an involution of $K=\F_{q^{3s}}$.
\end{corollary}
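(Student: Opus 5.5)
The plan mirrors the proof of Corollary~\ref{cor:involution}: reduce to Theorem~\ref{thm:forward} applied with the linearized inverse equal to the linearized polynomial itself.

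\textbf{Step 1: the exponent $r$.} Since $r^2\equiv1\pmod{q^{3s}-1}$, the inverse $r'$ of $r$ modulo $N$ equals $r$.

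\textbf{Step 2: applying the lift.} Suppose $\mathcal L$ is an involution of $\F_{q^3}$. Then $\mathcal L$ is a permutation, so $\Delta\ne0$, and $\mathcal L^{-1}=\mathcal L$ as functions on $\F_{q^3}$. Both sides are reduced $q$-linearized polynomials of $q$-degree at most $2$. Such polynomials correspond bijectively to $\F_q$-linear maps of $\F_{q^3}$, so their coefficients agree: $(A,B,C)=(a,b,c)$. Corollary~\ref{cor:trinomial} (equivalently Theorem~\ref{thm:forward} with $n=3$) then gives
\[
F^{-1}(x)=a x^{r}+b x^{r+T}+c x^{r+(q+1)T}=F(x),
\]
so $F$ is an involution of $K$.

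\textbf{Step 3: the stated equivalence.} It remains to justify that the three displayed equations characterize $\mathcal L\circ\mathcal L=X$. Compose $\mathcal L$ with itself and reduce exponents $q^{k}$ modulo $q^3$, which is valid as functions on $\F_{q^3}$ since $X^{q^3}=X$ there. The coefficient of $X^{q^k}$ is
\[
\sum_{i+j\equiv k \ (\mathrm{mod}\ 3)} a_i a_j^{q^i},\qquad (a_0,a_1,a_2)=(a,b,c).
\]
For each $k$ this gives:
\begin{itemize}
\item $k=0$: $a^2+bc^q+cb^{q^2}$;
\item $k=1$: $ab+ba^q+cc^{q^2}$;
\item $k=2$: $ac+bb^q+ca^{q^2}$.
\end{itemize}
By the uniqueness of reduced representatives noted in Step 2 (the same fact that underlies the linear independence argument in Theorem~\ref{thm:descent}), $\mathcal L\circ\mathcal L=X$ on $\F_{q^3}$ if and only if these three expressions equal $1,0,0$ respectively. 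This is exactly the stated system.

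\textbf{Main obstacle.} The only delicate point is the bookkeeping in Step 3: correctly assigning each product $a_ia_j^{q^i}$ to its residue class of $i+j$ modulo $3$. Everything else is a direct invocation of Theorem~\ref{thm:forward}.
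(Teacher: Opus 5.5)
Your proposal is correct and matches the paper's proof: $r'=r$ from $r^2\equiv1\pmod{q^{3s}-1}$, then Theorem~\ref{thm:forward} (equivalently Corollary~\ref{cor:trinomial}) applied with $\mathcal L^{-1}=\mathcal L$ gives $F^{-1}=F$. Your Step~3 also verifies the ``equivalently'' system by expanding $\mathcal L\circ\mathcal L$ modulo $X^{q^3}-X$ and using uniqueness of reduced $q$-polynomials; the paper states this system without proof, and your coefficient bookkeeping is correct.
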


\begin{proof}
  If $r^2\equiv1\pmod {q^{3s}-1}$, then $r'=r$.  Applying Theorem~\ref{thm:forward} to $\mathcal L^{-1}=\mathcal L$ gives $F^{-1}=F$.
\end{proof}

The following example gives a permutation trinomial that is an involution.

\begin{example}[A nonlinear trinomial involution over $\F_{2^{12}}$]\label{ex:trinomial-involution}
Let $q=4$, $s=2$, $K=\F_{2^{12}}$, and $T=2145$.  Let $\alpha\in\F_{64}$ be primitive such that
\[
        \alpha^6+\alpha+1=0.
\]
In $\F_{64}$ one checks that
\[
        a=\alpha,
        \qquad
        b=1,
        \qquad
        c=\alpha^{39}
\]
satisfy
\[
        a^2+bc^q+cb^{q^2}=1,
\]
\[
        ab+ba^q+cc^{q^2}=0,
\]
and
\[
        ac+bb^q+ca^{q^2}=0.
\]
Choose
\[
        r=1639.
\]
Then
\[
        r\equiv1\pmod {63},
        \qquad
        r^2\equiv1\pmod {4095}.
\]
By Corollary~\ref{cor:trinomial-involution},
\[
        J(x)=\alpha x^{1639}+x^{3784}+\alpha^{39}x^{79}
\]
is an involution of $\F_{2^{12}}$.  Hence its inverse is explicitly
\[
        J^{-1}(x)=J(x)=\alpha x^{1639}+x^{3784}+\alpha^{39}x^{79}.
\]
Again the exponents are not powers of $2$ modulo $4095$, so this is not a linearized example.
\end{example}

\section{Conclusion}

We presented a higher unit-torus lifting principle for sparse permutation polynomials and their compositional inverses.  The construction starts from a $q$-linearized permutation polynomial over $L=\F_{q^n}$ and lifts it to a sparse, generally nonlinear polynomial over $K=\F_{q^{ns}}$ by means of an exponent $T$ playing two simultaneous roles: $x^T$ lies in the norm-one torus $\mu_{(q^n-1)/(q-1)}$, while $T$ acts as $q-1$ on $L^{*}$.  The exponents
\[
e_i=\frac{q^i-1}{q-1}\qquad (0\leq i<n)
\]
are chosen so that they are compatible with the $q$-Frobenius action on the norm-one torus. Consequently, each nonzero term in the inverse of the underlying $q$-linearized polynomial gives rise to exactly one term in the inverse of the lifted nonlinear polynomial. Thus the lifting procedure preserves the number of nonzero terms in the inverse.

When $n=2$, the construction gives permutation binomials whose compositional inverses are again binomials, including explicit nonlinear examples such as
\[
x^{(q^4+q^3-q^2+q)/2}+ax
\]
over $\mathbb{F}_{q^4}$ for even $q$.  It also yields a simple involution criterion for this family.  When $n=3$, the construction produces permutation trinomials whose compositional inverses are again trinomials, together with explicit involutory examples.  More generally, the $n$-dimensional construction gives sparse permutation polynomials with at most $n$ terms whose compositional inverses also have at most $n$ terms.


A natural remaining problem is to understand the sparsity of the inverse of the underlying $q$-linearized polynomial.  In the present lifting construction, the number of terms in the lifted inverse is exactly controlled by the support of the inverse $q$-linearized polynomial over $\mathbb F_{q^n}$.  Thus it would be interesting to classify when this inverse has fewer than $n$ nonzero $q$-terms, or equivalently, to describe the vanishing patterns in the inverse Dickson matrix.


\end{document}